\title{Lie elements in the group algebra}
\author[Yu.M.Burman]{Yurii M.\ Burman}
\address{Independent University of Moscow (119002, 11, B.Vlassievsky per., 
Moscow, Russia) and Higher School of Economics (National Research 
University; 101000, 20, Myasnitskaya str., Moscow, Russia)}
\email{burman@mccme.ru}
\date{}
\newcommand{\theoremName}{Theorem}
\newcommand{\lemmaName}{Lemma}
\newcommand{\corollaryName}{Corollary}
\newcommand{\statementName}{Proposition}
\newcommand{\remarkName}{Remark}
\newcommand{\exampleName}{Example}
\newcommand{\definitionName}{Definition}
\newcommand{\problemName}{Problem}
\newcommand{\proofName}{Proof}
\renewcommand{\proofname}{\proofName}
\newcommand{\answerName}{Answer}
\newcommand{\hintName}{Hint}
\theoremstyle{plain}
\newtheorem {proposition}{\statementName}
\newtheorem {Theorem}{\theoremName}
\newtheorem {Corollary}{\corollaryName}
\theoremstyle{remark}
\theoremstyle{definition}
\newtheorem{Definition}{\definitionName}
\let\@newpf\proof 
\let\proof\relax
\def \namepf[#1] {\@newpf[\proofname\ #1]}
\newenvironment{proof}{\@ifnextchar[{\namepf}{\@newpf[\proofname]}}{\qed\endtrivlist}
\newcounter{qst}
\def \Integer {{\mathbb Z}}
\def \Complex {{\mathbb C}}
\def \lnorm#1\rnorm {\vphantom{#1}\left\|\smash{#1}\right\|}
\def \lmod#1\rmod {\vphantom{#1}\left|\smash{#1}\right|}
\newcommand \bydef {\stackrel{\mbox{\scriptsize def}}{=}}
\renewcommand \phi {\varphi}
\renewcommand \rho {\varrho}
\begin{document}

 \begin{abstract}
Given a representation $V$ of a group $G$, there are two natural ways of 
defining a representation of the group algebra $k[G]$ in the external power 
$V^{\wedge m}$. The set $\mathcal L(V)$ of elements of $k[G]$ for which 
these two ways give the same result is a Lie algebra and a representation 
of $G$. For the case when $G$ is a symmetric group and $V = \Complex^n$, a 
permutation representation, these spaces $\mathcal L(\Complex^n)$ are 
naturally embedded into one another. We describe $\mathcal L(\Complex^n)$ 
for small $n$ and formulate questions and conjectures for future research.
 \end{abstract}

\maketitle
\def \thesubsubsection {\arabic{subsubsection}}

\section{Setting and motivation}

Let $V$ be a finite-dimensional representation of a group $G$ over a field 
$k$. For every $g \in G$ and every $m$ define linear operators $A_m(g), 
B_m(g): V^{\wedge m} \to V^{\wedge m}$ as follows: 
 %*
 \begin{align*}
A_m(g) (v_1 \wedge \dots \wedge v_m) &= g(v_1) \wedge \dots 
\wedge g(v_m)\\
B_m(g) (v_1 \wedge \dots \wedge v_m) &= \sum_{p=1}^m v_1
\wedge \dots \wedge g(v_p) \wedge \dots \wedge v_m.
 \end{align*}
 %*
(here and below $v_1, \dots, v_m$ are arbitrary vectors in $V$). Then 
extend the operators $A_m, B_m: G \to \name{End}(V^{\wedge m})$ to the 
group algebra $k[G]$ by linearity. Also take by definition $A_0(g) = 1$ (an 
operator $k \to k$) and $B_0(g) = 0$ for every $g \in G$. 

 \begin{Definition}
An element $x \in k[G]$ satisfying $A_m(g) = B_m(g)$ for all $m = 0,1, 
\dots, \dim V$ is called a Lie element of $k[G]$ (with respect to the 
representation $V$). The set of Lie elements is denoted by $\mathcal L(V)$.
 \end{Definition}

Besides the associative algebra structure in $k[G]$ and 
$\name{End}(V^{\wedge m})$ consider an associated Lie algebra structure in 
them, taking $[p,q] = pq-qp$. 

 \begin{proposition}\label{Pp:Lie} 
Maps $A_m, B_m: k[G] \to \name{End}(V^{\wedge m})$ are Lie algebra 
homomorphisms. 
 \end{proposition}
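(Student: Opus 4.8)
The plan is to treat the two maps separately, since $A_m$ turns out to be an associative algebra homomorphism while $B_m$ does not. In both cases, because the commutator bracket and the maps $A_m, B_m$ are bilinear, it suffices to verify the homomorphism identity $F([g,h]) = [F(g), F(h)]$ on pairs of group elements $g, h \in G$ (the basis of $k[G]$) and then extend by bilinearity; the degenerate range ($m = 0$, where $B_0 \equiv 0$) is immediate.

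For $A_m$ I would first compute $A_m(g) A_m(h)$ on a decomposable element $v_1 \wedge \dots \wedge v_m$: applying $h$ to every factor and then $g$ yields $(gh)(v_1) \wedge \dots \wedge (gh)(v_m)$, so $A_m(gh) = A_m(g) A_m(h)$ on group elements and hence $A_m$ is an associative algebra homomorphism on all of $k[G]$. Any such homomorphism automatically respects the bracket, since $A_m(xy - yx) = A_m(x) A_m(y) - A_m(y) A_m(x)$, which is precisely the Lie homomorphism property.

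For $B_m$ the computation is more delicate, and this is the crux of the argument. Expanding $B_m(g) B_m(h)(v_1 \wedge \dots \wedge v_m)$ produces two kinds of terms: diagonal terms, in which $h$ and then $g$ act on the same factor $v_p$ giving $v_1 \wedge \dots \wedge (gh)(v_p) \wedge \dots \wedge v_m$, and off-diagonal terms, in which $g$ acts at one position and $h$ at a different one. The diagonal terms assemble exactly into $B_m(gh)$. The key observation is that the off-diagonal terms, indexed by the ordered pair of distinct positions $(p,q)$ at which $h$ and $g$ respectively act, are symmetric under interchanging $g$ and $h$ (relabel $p \leftrightarrow q$); hence they occur identically in $B_m(h) B_m(g)$ and cancel in the commutator. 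This leaves $[B_m(g), B_m(h)] = B_m(gh) - B_m(hg) = B_m([g,h])$ on group elements, and bilinearity completes the proof. The only real obstacle is keeping this cancellation honest, which the indexing by ordered position pairs makes transparent.

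Conceptually, one can package the $B_m$ case by factoring $B_m = \rho_m \circ \pi$, where $\pi \colon k[G] \to \name{End}(V)$ is the associative (hence Lie) homomorphism defining the representation, and $\rho_m \colon \name{End}(V) \to \name{End}(V^{\wedge m})$ sends an endomorphism $\phi$ to its derivation extension $\sum_p v_1 \wedge \dots \wedge \phi(v_p) \wedge \dots \wedge v_m$. The cancellation above is exactly the statement that $\rho_m$ is a Lie algebra homomorphism, so $B_m$ inherits the property as a composition of two Lie homomorphisms. (No analogous linear factorization is available for $A_m$, since the exterior power of an endomorphism is multiplicative but not additive, which is why $A_m$ is handled directly as an associative homomorphism.)
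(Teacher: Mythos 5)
Your proof is correct and follows essentially the same route as the paper: $A_m$ is shown to be an associative algebra homomorphism (hence a Lie one), and for $B_m$ the product $B_m(g)B_m(h)$ is split into diagonal terms, which assemble into $B_m(gh)$, and off-diagonal terms, which are symmetric in $g$ and $h$ and cancel in the commutator. Your closing remark factoring $B_m$ through the derivation extension $\rho_m$ is a pleasant conceptual addition not present in the paper, but it does not change the substance of the argument.
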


 \begin{proof}
It is clear that $A_m: G \to \name{End}(V^{\wedge m})$ is an associative 
algebra homomorphism ($A_m(x y) = A_m(x) A_m (y)$ for all $x, y \in G$), 
hence a Lie algebra homomorphism. For $B_m$ take $x = \sum_{g \in G} 
a_g g$, $y = \sum_{h \in G} b_h h$, to obtain
 %*
 \begin{align*}
B_m(x) &B_m(y) v_1 \wedge \dots \wedge v_m = \sum_{h \in 
G, 1 \le p \le m} b_h B_m(x) v_1 \wedge \dots \wedge 
h(v_p) \wedge \dots \wedge v_m\\
&= \sum_{g,h \in G, 1 \le p \le m} a_g b_h v_1 \wedge \dots \wedge 
g(h(v_p)) \wedge \dots \wedge v_m \\
&\hphantom{=}+ \sum_{g,h \in G, 1 \le p,q \le m, p \ne 
q} a_g b_h v_1 \wedge \dots \wedge h(v_p) \wedge \dots \wedge g(v_q) \wedge 
\dots \wedge v_m,
 \end{align*}
 %*
whence $B_m([x,y]) = [B_m(x), B_m(y)]$.
 \end{proof}

 \begin{Corollary}
The set of Lie elements $\mathcal L(V) \subset k[G]$ is a Lie subalgebra.
 \end{Corollary}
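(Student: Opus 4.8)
The plan is to deduce the statement entirely from Proposition~\ref{Pp:Lie}, reading $\mathcal L(V)$ as the common equalizer of the two families of homomorphisms. First I would dispense with the linear part of the assertion: since each $A_m$ and each $B_m$ is linear, the defining condition $A_m(x) = B_m(x)$ is a system of linear equations in $x$, so its solution set is automatically closed under addition and scalar multiplication. Thus $\mathcal L(V)$ is a linear subspace of $k[G]$, and only closure under the bracket $[x,y] = xy - yx$ remains to be checked.

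For the bracket I would fix an arbitrary $m$ and consider the set $E_m = \{x \in k[G] : A_m(x) = B_m(x)\}$, so that $\mathcal L(V) = \bigcap_{m=0}^{\dim V} E_m$. The key step is to show each $E_m$ is itself a Lie subalgebra. Given $x, y \in E_m$, Proposition~\ref{Pp:Lie} tells us both $A_m$ and $B_m$ are Lie algebra homomorphisms, so
\[
A_m([x,y]) = [A_m(x), A_m(y)] = [B_m(x), B_m(y)] = B_m([x,y]),
\]
where the middle equality uses $A_m(x) = B_m(x)$ and $A_m(y) = B_m(y)$. Hence $[x,y] \in E_m$. Since an intersection of Lie subalgebras is again a Lie subalgebra, $\mathcal L(V) = \bigcap_m E_m$ is a Lie subalgebra of $k[G]$, which is exactly the claim.

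I do not anticipate any genuine obstacle here: all the real work has already been done in establishing the homomorphism property of Proposition~\ref{Pp:Lie}. The only point that deserves a moment's attention is the general observation that the set on which two Lie homomorphisms agree is stable under the bracket, and this becomes transparent the instant both sides are rewritten through the homomorphism identity. In particular, no further manipulation of the explicit formulas defining $A_m$ and $B_m$ is required.
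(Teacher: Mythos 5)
Your argument is correct and is exactly the one the paper intends: the corollary is stated without proof precisely because it follows from Proposition~\ref{Pp:Lie} by the standard equalizer computation $A_m([x,y]) = [A_m(x),A_m(y)] = [B_m(x),B_m(y)] = B_m([x,y])$, together with linearity of $A_m$ and $B_m$. Nothing is missing.
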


 \begin{proposition} \label{Pp:Conj}
For any $x, y \in k[G]$ and any $m$ one has $y A_m(x) y^{-1} = 
€_m(yxy^{-1})$ and $y B_m(x) y^{-1} = B_m(yxy^{-1})$.
 \end{proposition}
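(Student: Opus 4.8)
The plan is to read the expression $y A_m(x) y^{-1}$ as conjugation by the operator $A_m(y)$ on $\name{End}(V^{\wedge m})$ (this is the only reading under which it typechecks, and for invertible $y$ the operator $A_m(y)$ is the one implementing $y$), and similarly for $B_m$. Under this reading the two halves of the statement have entirely different characters, and I would dispatch them separately.

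First, the $A_m$ identity. In the proof of Proposition~\ref{Pp:Lie} it was already noted that $A_m\colon k[G]\to\name{End}(V^{\wedge m})$ is an \emph{associative} algebra homomorphism, not merely a Lie one. Hence for invertible $y$ the operator $A_m(y)$ is invertible with $A_m(y)^{-1}=A_m(y^{-1})$, and
\[ A_m(y)\,A_m(x)\,A_m(y)^{-1}=A_m(y)\,A_m(x)\,A_m(y^{-1})=A_m(y\,x\,y^{-1}). \]
So this half is automatic and needs no computation inside $V^{\wedge m}$.

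Second, the $B_m$ identity, where the content lies. The shortcut above is unavailable, since Proposition~\ref{Pp:Lie} only gives that $B_m$ is a Lie homomorphism and $B_m$ is genuinely not multiplicative. I would reduce to a single direct computation. Both sides are linear in $x$, so it suffices to take $x=g$ a group element; and since conjugation of operators and the map $x\mapsto y x y^{-1}$ are each compatible with products in the variable $y$, it is enough to verify the formula for $y=h$ a group element. For such $h,g$ I would evaluate $A_m(h)\,B_m(g)\,A_m(h)^{-1}$ on a decomposable vector: applying $A_m(h)^{-1}=A_m(h^{-1})$ replaces each $v_q$ by $h^{-1}v_q$, then $B_m(g)$ inserts a $g$ one slot at a time, and finally $A_m(h)$ multiplies every slot by $h$. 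In the spectator slots $q\ne p$ the two factors collapse as $h h^{-1}=e$, while the active slot $p$ receives $h g h^{-1}$, giving
\[ \sum_{p}v_1\wedge\dots\wedge (h g h^{-1})v_p\wedge\dots\wedge v_m=B_m(h g h^{-1})(v_1\wedge\dots\wedge v_m). \]

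The crux — and the only place where anything actually happens — is this last computation, whose whole point is the cancellation $h h^{-1}=e$ in the spectator slots: it is precisely the failure of $B_m$ to be multiplicative that makes the statement nontrivial and forces one to work in $V^{\wedge m}$ rather than in $k[G]$. I would be careful to observe that the reduction in the variable $y$ only reaches group elements (a product of group elements is again a group element), so the $B_m$ identity is established on the elements $y\in G$ that generate the conjugation action; combined with the $A_m$ identity this is exactly what is needed to conclude that $x\mapsto y x y^{-1}$ preserves $\mathcal L(V)$ and hence endows it with the structure of a representation of $G$.
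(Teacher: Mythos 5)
Your proof is correct. The paper offers no argument for this proposition at all (it says only ``The proof is evident''), and your verification --- multiplicativity of $A_m$ for the first identity, and the slot-by-slot cancellation $hh^{-1}=e$ in the spectator factors for the second --- is precisely the evident computation the author must have in mind. Your closing caveat is also well taken: for a general $y \in k[G]$ the statement does not even make sense ($y$ need not be invertible), so restricting the reduction in $y$ to group elements is not a loss --- $y \in G$ is exactly what the subsequent corollary about the conjugation action uses.
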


The proof is evident.

 \begin{Corollary}
The set $\mathcal L(V) \subset k[G]$ is a representation of $G$ where 
elements of the group act by conjugation.
 \end{Corollary}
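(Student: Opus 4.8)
The plan is to verify the two things the word ``representation'' packages together: first, that conjugation really does define a linear action of $G$ on the whole group algebra, and second---this is the only part with content---that the subspace $\mathcal L(V)$ is carried into itself by this action. For the first point I would set $g \cdot x = g x g^{-1}$ for $g \in G$ and $x \in k[G]$, where $g^{-1}$ is the group-theoretic inverse; since every $g \in G$ is invertible in $k[G]$, this map is a well-defined linear automorphism of $k[G]$, and from $g_1 \cdot (g_2 \cdot x) = g_1 g_2 x g_2^{-1} g_1^{-1} = (g_1 g_2) \cdot x$ together with $e \cdot x = x$ one sees that $g \mapsto (g \cdot {-})$ is a group homomorphism $G \to \name{GL}(k[G])$. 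This is the standard conjugation (adjoint) representation and makes no reference to $V$ at all.

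The substance is invariance, and here I would invoke Proposition \ref{Pp:Conj} directly. Fix $x \in \mathcal L(V)$ and $g \in G$. By definition $A_m(x) = B_m(x)$ as operators on $V^{\wedge m}$ for every $m = 0, 1, \dots, \dim V$. Reading $y A_m(x) y^{-1}$ in Proposition \ref{Pp:Conj} as conjugation by the invertible operator $A_m(g)$---legitimate because $A_m$ is an associative homomorphism, so $A_m(g)^{-1} = A_m(g^{-1})$---and likewise for $B_m$, I obtain for each $m$ the chain $A_m(g x g^{-1}) = A_m(g) A_m(x) A_m(g)^{-1} = A_m(g) B_m(x) A_m(g)^{-1} = B_m(g x g^{-1})$, where the middle equality is just the hypothesis $A_m(x) = B_m(x)$. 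Hence $A_m(g x g^{-1}) = B_m(g x g^{-1})$ for all $m$, i.e.\ $g x g^{-1} \in \mathcal L(V)$. Since this holds for every $g$, the subspace is $G$-invariant, and restricting the conjugation action to it yields the asserted representation.

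I do not expect a genuine obstacle here: Proposition \ref{Pp:Conj} was established precisely so that conjugation on $k[G]$ intertwines with conjugation of the operators $A_m$ and $B_m$, while the Lie-element condition is nothing but a coincidence of $A_m$ and $B_m$, a condition manifestly stable under simultaneous conjugation. The single point deserving a word of care is that the symbol $y^{-1}$ in Proposition \ref{Pp:Conj} must be read consistently---on $V^{\wedge m}$ it denotes the inverse operator $A_m(y)^{-1}$, which exists exactly when $y$ is invertible in $k[G]$; specializing to $y = g \in G$, as we do, makes invertibility automatic. No estimates or case analysis enter, so the argument is as short as the statement suggests.
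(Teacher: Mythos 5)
Your argument is correct and is exactly the route the paper intends: the corollary is stated as an immediate consequence of Proposition \ref{Pp:Conj} (the paper offers no written proof, deeming it evident), and your chain $A_m(gxg^{-1}) = A_m(g)A_m(x)A_m(g)^{-1} = A_m(g)B_m(x)A_m(g)^{-1} = B_m(gxg^{-1})$ is precisely that consequence spelled out. Your care about reading $y^{-1}$ as $A_m(y)^{-1}$ on $V^{\wedge m}$ is a reasonable clarification but introduces no new content.
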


This note takes its origin from the paper \cite{BurmanZvonkine}. The paper 
contains a formula for the so called Hurwitz generating function which 
lists factorizations of a cyclic permutation $(12 \dots n)$ to a product of 
transpositions. The key ingredient of the proof of the formula is the fact 
that $1-(ij) \in \mathcal L(\Complex^n)$ where $\Complex^n$ is the 
permutation representation of the symmetric group (see Proposition 
\ref{Pp:Transp} below). Any other element $x \in \mathcal L(\Complex^n)$ 
corresponds to a generalization of this result producing a formula listing 
factorizations of the cycle to a product of various permutations with 
various weights; the weights depend on $x$. Equivalently, the same formula 
lists graphs embedded into oriented surfaces so that their complement is 
homeomorphic to a disk; any $x \in \mathcal L(\Complex^n)$ generates a 
formula listing similar embeddings of multi-graphs (again, with the weights 
depending on $x$).

This note is a description of research in progress; see the list of 
questions and conjectures at the end.

\section{The symmetric group case}

Here we take $G = S_n$, $n = 2, 3, \dots$. Let $k = \Complex$ and $V$ be an 
$n$-dimensional permutation representation of $S_n$ (the group acts on 
elements of the basis $x_1, \dots, x_n \in \Complex^n$ permuting their 
indices). We'll be writing $\mathcal L_n$ for short, instead of $\mathcal 
L(\Complex^n)$.

 \begin{proposition}[cf.\ \cite{BurmanZvonkine}]\label{Pp:Transp}
$1 - (ij) \in \mathcal L_n$ for all $1 \le i < j \le n$.
 \end{proposition}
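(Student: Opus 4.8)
The plan is to verify the defining condition $A_m(x) = B_m(x)$ for $x = 1 - (ij)$ directly, one value of $m$ at a time, by evaluating both operators on the standard monomial basis of $V^{\wedge m}$. Since $A_m$ and $B_m$ are linear, and since $A_m(1) = \name{id}$ while $B_m(1) = m \cdot \name{id}$ (the identity element contributes a summand equal to the input in each of the $m$ slots), writing $g = (ij)$ the required equality $A_m(1) - A_m(g) = B_m(1) - B_m(g)$ collapses to the single operator identity
\[
A_m(g) - B_m(g) = (1 - m)\,\name{id}_{V^{\wedge m}},
\]
which I would establish on each basis vector $w = x_{i_1} \wedge \dots \wedge x_{i_m}$ with $i_1 < \dots < i_m$.

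Next I would split into three cases according to how many of the two indices $i, j$ occur among $i_1, \dots, i_m$. If neither occurs, $g$ fixes every factor, so $A_m(g) w = w$ and $B_m(g) w = m w$, giving the difference $(1-m) w$. If both $i$ and $j$ occur, at positions $a < b$, then $A_m(g) w$ interchanges the factors $x_i$ and $x_j$, which in the exterior power returns $-w$ (a single transposition of factors); meanwhile in $B_m(g) w$ the two slots $a$ and $b$ each produce a monomial in which some factor is repeated and hence vanish, while the remaining $m - 2$ slots each return $w$, so $B_m(g) w = (m-2) w$, and the difference is again $(1-m) w$.

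The remaining case, exactly one of $i, j$ present, is where the mechanism is cleanest and which I expect to carry the main idea rather than the main difficulty. Say $x_i$ appears and $x_j$ does not. Then the single slot carrying $x_i$ contributes to $B_m(g) w$ precisely the monomial in which $x_i$ is replaced by $x_j$, which is exactly $A_m(g) w$, while the other $m-1$ slots return $w$. Hence $B_m(g) w = (m-1) w + A_m(g) w$, so $A_m(g) w - B_m(g) w = -(m-1) w = (1-m) w$, as needed, the possibly awkward reordering sign in $A_m(g) w$ cancelling automatically because the same term appears on both sides. The genuinely delicate point throughout is the sign and vanishing bookkeeping in the exterior algebra; the conceptual way to keep it under control is to view $A(g) = \bigoplus_m A_m(g)$ as the algebra automorphism of the exterior algebra $\Lambda V$ induced by $g$ and $B(g) = \bigoplus_m B_m(g)$ as the derivation of $\Lambda V$ extending $g|_V$, which makes both the repeated-factor vanishing and the antisymmetry transparent. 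Collecting the three cases gives the displayed identity for every $m$, including the trivial checks $m = 0$ (both sides $0$) and $m = 1$ (where $A_1 = B_1$ on all of $\Complex[S_n]$), completing the proof.
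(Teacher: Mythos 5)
Your proof is correct and follows essentially the same route as the paper's: reduce to the single identity $A_m((ij)) - B_m((ij)) = (1-m)\,\mathrm{id}$ using $A_m(1)=\mathrm{id}$, $B_m(1)=m\cdot\mathrm{id}$, then check it on basis monomials in the same three cases (neither, one, or both of $i,j$ present), with the same sign and vanishing bookkeeping. The only cosmetic differences are that the paper first uses conjugation to reduce to $(12)$, while you treat general $(ij)$ directly and add the (correct, clarifying) remark that $A$ is the induced algebra automorphism and $B$ the induced derivation of $\Lambda V$.
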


 \begin{proof}
Take any $v_1, \dots, v_m \in V$; then $A_m(1) v_1 \wedge \dots \wedge v_m 
= v_1 \wedge \dots \wedge v_m$ and $B_m(1) v_1 \wedge \dots \wedge v_m = m 
v_1 \wedge \dots \wedge v_m$.

It follows from Proposition \ref{Pp:Conj} that without loss of generality 
one may assume $i = 1, j = 2$. Apparently, this is enough to take for $v_s$ 
basic vectors: $v_s = x_{i_s}$ for all $s = 1, \dots, m$, where $1 \le i_1 
< \dots < i_m \le n$ are any indices. Consider now three cases:

\subsubsection{$i_1, \dots, i_m \ne 1, 2$} Then 
 %*
 \begin{align*}
&A_m((12)) (x_{i_1} \wedge \dots \wedge x_{i_m}) = x_{i_1} \wedge \dots 
\wedge x_{i_m}, \\
&B_m((12)) (x_{i_1} \wedge \dots \wedge x_{i_m}) = m x_{i_1} \wedge \dots 
\wedge x_{i_m},
 \end{align*}
 %*
so that 
 %*
 \begin{equation*}
A_m(1-(12)) (x_{i_1} \wedge \dots \wedge x_{i_m}) = 0 = B_m(1-(12)) 
(x_{i_1} \wedge \dots \wedge x_{i_m}).
 \end{equation*}
 %*

\subsubsection{$i_1 = 1$, $i_2, \dots, i_m \ne 1,2$} Then 
 %*
 \begin{align*}
&A_m((12)) (x_1 \wedge x_{i_2} \wedge \dots \wedge x_{i_m}) = x_2 \wedge 
x_{i_2} \wedge \dots \wedge x_{i_m},\\
&B_m((12)) (x_2 + (m-1)x_1) \wedge x_{i_2} \wedge \dots \wedge x_{i_m}) = 
x_2 \wedge x_{i_2} \wedge \dots \wedge x_{i_m},
 \end{align*}
 %*
so that 
 %*
 \begin{align*}
A_m(1-(12)) (x_1 \wedge x_{i_2} \wedge \dots \wedge x_{i_m}) &= (x_1-x_2) 
\wedge x_{i_2} \wedge \dots \wedge x_{i_m}\\
&= B_m(1-(12)) (x_1 \wedge x_{i_2} \wedge \dots \wedge x_{i_m})
 \end{align*}
 %*

\subsubsection{$i_1 = 1, i_2 = 2$} Then 
 %*
 \begin{align*}
&A_m((12)) (x_1 \wedge x_2 \wedge x_{i_3} \wedge \dots \wedge x_{i_m}) = - 
x_1 \wedge x_2 \wedge x_{i_3} \wedge \dots \wedge x_{i_m},\\
&B_m((12)) (x_1 \wedge x_2 \wedge x_{i_3} \wedge \dots \wedge x_{i_m}) = 
(m-2) x_1 \wedge x_2 \wedge x_{i_3} \wedge \dots \wedge x_{i_m},
 \end{align*}
 %*
so that 
 %*
 \begin{align*}
A_m(1-(12)) (x_1 \wedge x_2 \wedge x_{i_3} \wedge \dots \wedge x_{i_m}) &= 
2 x_1 \wedge x_2 \wedge x_{i_3} \wedge \dots \wedge x_{i_m}\\
&= B_m(1-(12)) (x_1 \wedge x_2 \wedge x_{i_3} \wedge \dots \wedge x_{i_m}).
 \end{align*}
 %*
 \end{proof}

Denote by $\iota_n: S_n \to S_{n+1}$ a standard embedding: for any 
permutation $\sigma \in S_n$ take $\iota_n(\sigma)(k) = \sigma(k)$ for any 
$1 \le k \le n$ and $\iota_n(\sigma)(n+1) = n+1$. The embedding can be 
extended by linearity to an algebra homomorphism $\iota_n: \Complex[S_n] 
\to \Complex[S_{n+1}]$.

 \begin{proposition}
$\iota_n(\mathcal L_n) \subset \mathcal L_{n+1}$.
 \end{proposition}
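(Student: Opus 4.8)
The plan is to exploit the way the permutation representation $\Complex^{n+1}$ splits when $S_{n+1}$ is restricted to the subgroup $\iota_n(S_n)$. Write $V = \Complex^n = \langle x_1, \dots, x_n\rangle$ and $L = \langle x_{n+1}\rangle$. Every $\iota_n(\sigma)$ fixes $x_{n+1}$ and permutes $x_1, \dots, x_n$ among themselves, so, as a representation of $S_n$ acting through $\iota_n$, one has $\Complex^{n+1} = V \oplus L$ with $L$ trivial and $V$ the permutation representation. Since $L$ is one-dimensional, the exterior power decomposes as $(\Complex^{n+1})^{\wedge m} = V^{\wedge m} \oplus (V^{\wedge(m-1)} \wedge x_{n+1})$, and both $A_m(\iota_n(\sigma))$ and $B_m(\iota_n(\sigma))$ preserve each summand (the factor $x_{n+1}$ is always carried to itself). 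It therefore suffices to prove $A_m(\iota_n(x)) = B_m(\iota_n(x))$ summand by summand, for every $x \in \mathcal L_n$ and every $0 \le m \le n+1$.

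On the summand $V^{\wedge m}$ the operators $A_m(\iota_n(\sigma))$ and $B_m(\iota_n(\sigma))$ restrict, by the very definitions, to the operators $A_m(\sigma)$ and $B_m(\sigma)$ of the representation $V = \Complex^n$; summing over $\sigma$ these become $A_m(x)$ and $B_m(x)$, which agree by the hypothesis $x \in \mathcal L_n$ whenever $m \le n$, while for $m = n+1$ the summand $V^{\wedge(n+1)}$ vanishes. On the summand $V^{\wedge(m-1)} \wedge x_{n+1}$, identified with $V^{\wedge(m-1)}$ via $\omega \mapsto \omega \wedge x_{n+1}$, a direct application of the definitions gives that $A_m(\iota_n(\sigma))$ realizes $A_{m-1}(\sigma)$, whereas $B_m(\iota_n(\sigma))$ realizes $B_{m-1}(\sigma) + \mathrm{Id}$: the extra summand is the term in which $B_m$ acts on the last factor $x_{n+1}$, which $\iota_n(\sigma)$ leaves fixed. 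Summing over $x = \sum_\sigma a_\sigma \sigma$ turns these into $A_{m-1}(x)$ and $B_{m-1}(x) + \epsilon(x)\,\mathrm{Id}$, where $\epsilon(x) = \sum_\sigma a_\sigma$ is the augmentation.

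It remains to reconcile the two expressions on the second summand, and this is where the $m = 0$ clause in the definition of $\mathcal L_n$ enters decisively. The equality $A_{m-1}(x) = B_{m-1}(x)$ holds for $m - 1 \le n$ by hypothesis, so the obstruction is exactly the term $\epsilon(x)\,\mathrm{Id}$; but $A_0(x) = \sum_\sigma a_\sigma = \epsilon(x)$ while $B_0(x) = 0$, so $x \in \mathcal L_n$ already forces $\epsilon(x) = 0$. The extra term thus vanishes, the two operators agree on both summands for every $0 \le m \le n+1$, and hence $\iota_n(x) \in \mathcal L_{n+1}$. I expect the only delicate point to be the bookkeeping of that stray identity term on the $x_{n+1}$-component: it is easy to overlook, yet the whole argument turns on recognizing it as $\epsilon(x)\,\mathrm{Id}$ and on seeing that it is annihilated precisely by the augmentation condition contained in the case $m = 0$. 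Everything else is a routine unravelling of the definitions across the two pieces of the decomposition.
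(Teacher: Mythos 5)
Your proof is correct and is essentially the paper's own argument: the paper's two cases (basis vectors with $i_m \le n$ versus $i_m = n+1$) are exactly your two summands $V^{\wedge m}$ and $V^{\wedge(m-1)} \wedge x_{n+1}$, and both proofs hinge on the same observation that the stray term on the second summand equals the augmentation $\sum_\sigma a_\sigma = A_0(u) - B_0(u)$, which vanishes by the $m=0$ clause. Your phrasing via the invariant decomposition $\Complex^{n+1} = \Complex^n \oplus \langle x_{n+1}\rangle$ is merely a coordinate-free rewording of the same computation.
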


 \begin{proof}
Let $u = \sum_{\sigma \in S_n} a_\sigma \sigma \in \mathcal L_n$. Like in 
Proposition \ref{Pp:Transp} above, it is enough to consider the action of 
$\iota_n(u)$ on $x \bydef x_{i_1} \wedge \dots \wedge x_{i_m}$ where $1 \le 
i_1 < \dots < i_m \le n+1$. Consider two cases.

\setcounter{subsubsection}{0}
\subsubsection{$i_m \le n$} Then $A_m(\iota_n(u))(x) = 
A_m(u)(x) = B_m(u)(x) = B_m(\iota_n(u))(x)$, so that $\iota_n(u) \in 
\mathcal L_{n+1}$.

\subsubsection{$i_m = n+1$} Then $A_m(\iota_n(u))(x) = 
A_{m-1}(u)(x_{i_1} \wedge \dots \wedge x_{i_{m-1}}) \wedge x_{n+1}$. On the 
other hand,
 %*
 \begin{equation*}
B_m(\iota_n(u))(x) = \left(\sum_{\sigma \in S_n} a_\sigma \sum_{p=1}^n 
x_{i_1} \wedge \dots \wedge x_{\sigma(i_p)} \wedge \dots \wedge 
x_{i_{m-1}}\right) \wedge x_{n+1} + \sum_{\sigma \in S_n} a_\sigma \cdot x.
 \end{equation*}
 %*
One has $A_0(u) = \sum_{\sigma \in S_n} a_\sigma$ and $B_0(u) = 0$. Once $u 
\in \mathcal L_n$, the last term in the equation above is zero, so
 %*
 \begin{equation*}
B_m(\iota_n(u))(x) = B_{m-1}(u)(x_{i_1} \wedge \dots \wedge x_{i_{m-1}}) 
\wedge x_{n+1},
 \end{equation*}
 %*
whence $A_m(\iota_n(u))(x) = B_m(\iota_n(u))(x)$, and again $\iota_n(u) \in 
\mathcal L_{n+1}$.
 \end{proof}

\section{$\mathcal L_n$ for small $n$}

One has $\dim \mathcal L_2 = 1$. The space is spanned by $1 - (12) \in 
\Complex[S_2]$, is a trivial Lie algebra and a trivial representation of 
$S_2 = \Integer/2\Integer$. 

The space $\mathcal L_3$ contains elements $1-(12)$, $1-(23)$ and $1-(13)$ 
by Proposition \ref{Pp:Transp}. By the corollary of Proposition 
\ref{Pp:Lie} it also contains $[1-(12),1-(23)] = (123)-(132)$ (by $(i_1 
\dots i_k) \in S_n$ we mean a cyclic permutation sending every $i_s$ to 
$i_{s+1 \bmod k}$). Easy calculations show that these elements form a basis 
in $\mathcal L_3$, so that $\dim \mathcal L_3 = 4$. The space $\mathcal 
L_3$ splits, as a representation of $S_3$, to the trivial representation 
$V_0$ (spanned by $1 - (12)/3 - (13)/3 - (23)/3$), sign representation 
$V_1$ (spanned by $(123) - (132)$) and a two-dimensional representation 
$V_2$ (spanned by $(12)-(13)$, $(13)-(23)$ and $(23)-(12)$; the elements 
sum up to zero, and any two of them form a basis). As a Lie algebra 
$\mathcal L_3$ is a direct sum of the center $V_0$ and a three-dimensional 
subalgebra spanned by $V_1 \cup V_2$. (This statement is partly true for 
any $n$: $\mathcal L_n$ contains a trivial representation, which lies in 
its center as a Lie algebra.)

The space $\mathcal L_4$ contains, by Proposition \ref{Pp:Transp}, the $6$ 
elements $1-(ij)$, $1 \le i < j \le 4$. By Propositions \ref{Pp:Lie} and 
\ref{Pp:Conj} it also contains all the elements $(ijk) - (ikj) = 
[1-(ij),(1-(jk)]$, $1 \le i < j < k \le 4$ (totally $4$), and the elements 
$\gamma_1 = [1-(14),(123)-(132)] = (1234)+(1432)-(1243)-(1342)$ and 
$\gamma_2 = [1-(24), (123)-(132)] = (1243)+(1342)-(1324)-(1423)$. Easy 
computer-assisted computations show that these $12$ elements form a basis 
in $\mathcal L_4$. 

As a representation of $S_4$, $\mathcal L_4$ contains a $6$-dimensional 
representation spanned by $1-(ij)$, $1 \le i < j \le 4$; it splits 
into a trivial representation spanned by $1 - \frac16 \sum_{1 \le i < j \le 
4} (ij)$, a $3$-dimensional representation of the type $(3,1)$ and a 
$2$-dimensional representation of the type $(2,2)$. Another $4$-dimensional 
subrepresentation of $\mathcal L_4$ is spanned by $(ijk) - (ikj)$, $1 \le i 
< j < k \le 4$; it splits into a sign representation (spanned by $\sum_{1 
\le i < j < k \le 4} (ijk)-(ikj)$) and a $3$-dimensional representation of 
the type $(2,1,1)$. The elements $\gamma_1$ and $\gamma_2$ span a 
$2$-dimensional subrepresentation. Totally, $\mathcal L_4$ contains a 
trivial representation, a sign representation, two copies of a 
$2$-dimensional representation and two nonisomorphic $3$-dimensional 
representations.

\section{Questions and conjectures}

\subsection{Dimension and representations} For an arbitrary $n$, what is 
the dimension of $\mathcal L_n$? A refinement of the question: find the 
Frobenius character $R_n = \sum_{\lmod \lambda\rmod = n} a_\lambda 
\chi_\lambda$ of the representation $\mathcal L_n$; here the sum runs over 
all partitions of $n$, $a_\lambda$ is the multiplicity in $\mathcal L_n$ of 
the irreducible representation of $S_n$ of the type $\lambda$, and 
$\chi_\lambda$ is the Schur polynomial corresponding to $\lambda$. 

\subsection{Generators} 
\def \theoremName {Conjecture}

 \begin{Theorem}
The Lie algebra $\mathcal L_n$ is generated by the elements $\nu_{ij} = 1 - 
(ij)$, $1 \le i < j \le n$.
 \end{Theorem}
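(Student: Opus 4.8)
The plan is to recast membership in $\mathcal L_n$ as a statement about derivations, which splits the conjecture into a ``visible'' part governed by $\mathfrak{gl}$ and an ``invisible'' kernel part. Write $\bigwedge V = \bigoplus_m V^{\wedge m}$ for the exterior algebra of $V = \Complex^n$, and for $x \in \Complex[S_n]$ set $A(x) = \bigoplus_m A_m(x)$ and $B(x) = \bigoplus_m B_m(x)$, both degree-preserving endomorphisms of $\bigwedge V$. By construction $B(x)$ is a \emph{derivation} of $\bigwedge V$ (the unique one extending $x|_V$), whereas $A(x)$ is a linear combination of algebra automorphisms. The first step I would prove is the reformulation: $x \in \mathcal L_n$ if and only if $A(x)$ is a derivation of $\bigwedge V$. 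One direction is Definition-plus-Proposition \ref{Pp:Lie}; for the other, note $A_1(x) = B_1(x)$ always, so a derivation $A(x)$ must agree with the derivation $B(x)$ on all of $\bigwedge V$, and $A(x)(1) = 0$ recovers the augmentation condition $A_0 = B_0$. Finally, restriction to $V = V^{\wedge 1}$ identifies the degree-$0$ derivations of $\bigwedge V$ with $\mathfrak{gl}(V)$ as Lie algebras, under which $B$ becomes the permutation representation $\rho\colon \Complex[S_n] \to \mathfrak{gl}(V)$, $x \mapsto x|_V$.

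Granting this, $A|_{\mathcal L_n} = \rho|_{\mathcal L_n}$ is a Lie homomorphism, yielding a short exact sequence $0 \to \ker A \to \mathcal L_n \to \mathcal D_n \to 0$ with $\mathcal D_n = \rho(\mathcal L_n)$. Augmentation zero forces $x|_V$ to annihilate $\mathbf 1 = x_1 + \dots + x_n$ and to preserve the standard subrepresentation $W$, so $\mathcal D_n \subseteq \mathfrak{gl}(W)$; and $\ker A = \bigcap_m \ker A_m$ is the two-sided ideal spanned by the isotypic blocks of the irreducibles absent from $\bigwedge V$, i.e.\ the \emph{non-hook} ones (note $\ker A \subseteq \mathcal L_n$ automatically). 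Since conjugation permutes the $\nu_{ij}$, the generated Lie algebra $\mathcal G_n = \langle \nu_{ij}\rangle$ and $\mathcal L_n$ are both $S_n$-subrepresentations with $\mathcal G_n \subseteq \mathcal L_n$, so it is enough to verify two inclusions and then count dimensions along the sequence: (i) $\rho(\mathcal G_n) = \mathcal D_n$, and (ii) $\ker A \subseteq \mathcal G_n$.

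For subgoal (i), $\rho(\nu_{ij}) = L_{ij}$ is the edge Laplacian sending $x_i \mapsto x_i - x_j$, $x_j \mapsto x_j - x_i$ and fixing the rest, so $\rho(\mathcal G_n)$ is the Lie subalgebra of $\mathfrak{gl}(W)$ generated by the $L_{ij}$. The plan is first to identify $\mathcal D_n$ explicitly (the hook part of $\mathcal L_n$) and then to show the symmetric generators $L_{ij}$, together with their brackets, fill it. For $n=3$ one has $\mathcal D_3 = \mathfrak{gl}(W)$; already at $n=4$ the image is a proper codimension-one subalgebra, so one must pin down the extra trace-type constraint. This is a concrete classical computation once $\mathcal D_n$ is known: symmetric matrices and their antisymmetric brackets generically generate a full reductive subalgebra.

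The hard part will be subgoal (ii): elements of $\ker A$ are invisible to $\rho$, so they cannot be detected by restricting to $V$, and their presence in $\mathcal L_n$ (e.g.\ the contributions of $\gamma_1, \gamma_2$ at $n=4$) must be reproduced purely by iterated brackets of transpositions. Here I would use that $\ker A$ is a Lie ideal of $\mathcal L_n$ and try to show that each non-hook isotypic component already lies in the derived structure of $\mathcal G_n$, exhibiting for each such $\lambda$ an explicit bracket expression in its $\lambda$-component and invoking $S_n$-equivariance and transitivity on transpositions to cut down the bookkeeping. The deeper obstacle, and the reason the statement remains a conjecture, is that certifying the concluding dimension count requires knowing $\dim \mathcal L_n$ and the multiplicities $a_\lambda$; progress on (ii) is therefore entangled with the still-open computation of the character $R_n$ discussed above, and I expect the generation statement and the dimension formula to have to be established together.
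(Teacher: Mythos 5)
You should first be aware that the paper contains no proof of this statement: the \texttt{Theorem} environment's displayed name is redefined to ``Conjecture'' immediately before it, and the only supporting evidence offered is the sentence ``Computations confirm the conjecture for $n\le 5$.'' So there is nothing in the paper to compare your argument with, and your text is, by your own admission, a programme rather than a proof: subgoals (i) and (ii) are both left open, and together they \emph{are} the conjecture. As a proof of the statement the proposal is therefore incomplete, with the genuine gap concentrated in (ii): nothing is offered to show that the non-hook ideal is reachable by iterated brackets of the $\nu_{ij}$. Note in particular that the projection of $\langle\nu_{ij}\rangle$ onto each block $\name{End}(V_\lambda)$ being surjective would not suffice, since the generated Lie algebra could still be a proper Goursat-type fibre product inside the direct sum of blocks; ruling this out is precisely the open content of the conjecture.

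That said, your structural reduction is correct and in fact sharper than you realize. The reformulation ``$x\in\mathcal L_n$ iff $A(x)$ is a derivation of $\bigwedge V$'' is right, as is the identification of $\ker A=\bigcap_m\ker A_m$ with the sum of the matrix blocks $\name{End}(V_\lambda)$ over non-hook $\lambda$, together with the automatic inclusion $\ker A\subseteq\mathcal L_n$. Pushing one step further settles your subgoal of identifying $\mathcal D_n$: since $\Complex[S_n]\to\bigoplus_\lambda\name{End}(V_\lambda)$ is surjective and the condition $A_m=B_m$ imposes on each hook block $V_{(n-m,1^m)}\cong\bigwedge^m W$ exactly one equation, namely that $x|_{V_{(n-m,1^m)}}$ equal the restriction to $\bigwedge^m W$ of the derivation extending $\phi=x|_W$, every $\phi\in\mathfrak{gl}(W)$ lifts to a Lie element. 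Hence $\mathcal D_n=\mathfrak{gl}(W)$ in full and $\dim\mathcal L_n=(n-1)^2+n!-\binom{2n-2}{n-1}$. In particular your assertion that $\mathcal D_4$ has codimension one in $\mathfrak{gl}(W)$ is false; it appears to be inherited from the paper's value $\dim\mathcal L_4=12$, which your own (correct) inclusion $\ker A\subseteq\mathcal L_n$ already contradicts: the central idempotent $e_{(2,2)}$ annihilates every $\bigwedge^m\Complex^4$ and has zero augmentation, so it is a Lie element, yet it has nonzero coefficients at the double transpositions and thus lies outside the span of the paper's twelve basis vectors, giving $\dim\mathcal L_4=13$. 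Any attack on the conjecture along your lines must therefore begin by re-auditing the computational evidence for $n=4,5$, since it is exactly the non-hook part --- the hard case (ii) --- on which that evidence appears unreliable.
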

Computations confirm the conjecture for $n \le 5$.

\subsection{Action on the original representation} The elements of 
$\mathcal L(V) \subset k[G]$ act in the original representation $V$ of the 
group $G$. This action may have a kernel. These kernels and quotients of 
$\mathcal L(V)$ by them sometimes exhibit interesting properties:

 \begin{Theorem}
Let $K_n$ be a kernel of the action of $\mathcal L_n$ in the permutation 
representation $\Complex^n$. Then $\dim \mathcal L_n/K_n = (n-1)!$. The 
repeated commutators 
 %*
 \begin{equation*}
[\dots[\nu_{1i_1},\nu_{2i_2}],\nu_{3i_3}],\dots],\nu_{n-1,i_{n-1}}]
 \end{equation*}
 %*
for all $i_1, \dots, i_{n-1}$ such that $s+1 \le i_s \le n$ for all $s = 1, 
\dots, n-1$ form a basis in $\dim \mathcal L_n/K_n$.
 \end{Theorem}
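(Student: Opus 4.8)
The plan is to realize $\mathcal L_n/K_n$ as a concrete algebra of matrices and to reduce the statement to a combinatorial analysis of nested commutators of rank-one operators. Write $\rho = A_1\colon \mathcal L_n \to \name{End}(\Complex^n)$; since $\rho$ is a Lie homomorphism (Proposition~\ref{Pp:Lie}), $K_n = \ker\rho$ and $\mathcal L_n/K_n \cong \rho(\mathcal L_n)$. Every Lie element has augmentation zero (the $m=0$ condition $A_0 = B_0$ reads $\sum_\sigma a_\sigma = 0$), so $\rho(x)$ kills the invariant vector $x_1 + \dots + x_n$ and preserves the standard subspace $U = \{\sum c_i x_i : \sum c_i = 0\}$; hence $\rho(\mathcal L_n) \subseteq \name{End}(U)$. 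On generators $\rho(\nu_{ij}) = (x_i - x_j)(x_i - x_j)^{\top} =: P_{ij}$, a rank-one symmetric operator, and the image of each prescribed bracket is the corresponding left-normed bracket of the operators $P_{s,i_s}$.

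The computation that drives everything is the two-term formula $[P_{ab}, P_{cd}] = \langle x_a - x_b,\, x_c - x_d\rangle\,\bigl((x_a - x_b)(x_c - x_d)^{\top} - (x_c - x_d)(x_a - x_b)^{\top}\bigr)$, whose scalar factor equals $0$ when the transpositions $(ab)$ and $(cd)$ are disjoint, $\pm 1$ when they share a single index, and $2$ when they coincide. Iterating, the left-normed bracket $C_{i_1,\dots,i_{n-1}}$ is a scalar (a product of such $\pm 1$ factors) times an explicit operator of rank at most~two, and it is nonzero only when, at every stage $s$, the transposition $(s, i_s)$ meets the index set accumulated so far. I would first isolate exactly which tuples with $s+1 \le i_s \le n$ survive, then attempt linear independence of the surviving operators by a triangularity argument: order the tuples, and for each one exhibit an ordered pair of basis vectors on which it has a nonzero entry not shared by any later tuple, so that the transition matrix to an elementary-matrix basis of $\name{End}(\Complex^n)$ is triangular with invertible diagonal.

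For the matching upper bound $\dim \mathcal L_n/K_n \le (n-1)!$ — equivalently, that the prescribed brackets span the image — the route is a normal-form theorem: using the Jacobi identity together with the two-term formula, every bracket of the $P_{ij}$ should reduce modulo $K_n$ to a combination of the chosen left-normed family, with the condition $i_s \ge s+1$ playing the role of the Hall/Lyndon admissibility condition that pins down the normal form. Combined with independence, this would yield both the dimension and the basis simultaneously.

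The hard part will be reconciling the vanishing phenomenon with the count. The two-term formula shows that a great many left-normed brackets of the $P_{ij}$ collapse to zero, so it is not at all automatic that the $(n-1)!$ tuples with $i_s \ge s+1$ give nonzero, pairwise non-proportional operators; establishing this — and ruling out accidental coincidences modulo $K_n$ — is the crux, and it is exactly the point at which a naive triangular computation is liable to break down. I expect a complete argument to require identifying the quotient Lie algebra structure outright, presumably in tandem with the generation conjecture of the previous subsection, rather than proceeding by direct evaluation alone.
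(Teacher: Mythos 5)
There is no proof in the paper to compare yours against: the statement sits in the final section ``Questions and conjectures,'' and the macro redefinition just above it makes the environment print as a \emph{Conjecture}. It is an open claim of a research-in-progress note. Your text, in turn, is not a proof but a plan, and you say so yourself: both load-bearing steps --- linear independence of the surviving left-normed brackets and the normal-form argument for spanning --- are explicitly deferred. So, taken as a proof attempt, it has genuine (and acknowledged) gaps even before one checks any details.

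The more serious problem is that your own machinery, applied to the smallest nontrivial case $n=3$, contradicts the statement as you (and I) read it, and this sanity check should have preceded the design of a strategy. For $n=3$ the prescribed tuples are $(i_1,i_2)=(2,3)$ and $(3,3)$, giving the two elements $[\nu_{12},\nu_{23}]$ and $[\nu_{13},\nu_{23}]$; a one-line computation in $\Complex[S_3]$ (or your two-term formula combined with $(x_1-x_3)\wedge(x_2-x_3)=(x_1-x_2)\wedge(x_2-x_3)$) shows both equal $\pm\bigl((123)-(132)\bigr)$, so they are proportional and cannot be a basis of anything $2$-dimensional. Worse, $\rho(\nu_{12}),\rho(\nu_{13}),\rho(\nu_{23})$ are linearly independent in $\name{End}(\Complex^3)$ (look at the three off-diagonal entries), and adding the antisymmetric operator $\rho\bigl((123)-(132)\bigr)$ shows $K_3=0$, hence $\dim\mathcal L_3/K_3=\dim\mathcal L_3=4\ne(3-1)!=2$. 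So if $K_n$ really is the kernel of $A_1$ --- the only reading the paper's phrase ``act in the original representation'' seems to support --- the conjecture fails already at $n=3$, and no completion of your triangularity-plus-normal-form program can establish it. Your reduction to the rank-one operators $P_{ij}$ and the formula $[uu^{\top},vv^{\top}]=\langle u,v\rangle(uv^{\top}-vu^{\top})$ are correct and would be exactly the right tools for a repaired statement, but as written the proposal is aimed at a target that your own formulas show cannot be hit; the first order of business is to pin down what $K_n$ is actually meant to be, or to emend the claim.
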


\section*{Grants and acknowledgements}

The final stage of the work was supported by the RFBR grant NSh-5138.2014.1 
``Singularities theory and its applications'', by the Higher School of 
Economics (HSE) Scientific foundation grant 12-01-0015 ``Differential 
geometry on graphs and discrete path integration'', by the by Dobrushin 
professorship grant 2013 (the Independent University of Moscow) and by the 
Simons foundation grant (autumn 2013).

\end{document}